\documentclass[12pt]{article}
\usepackage{amsmath,amsfonts,amssymb,amsthm,amscd}
\usepackage{xcolor}
\title{Notes on  Lie derivatives, algebraic $D$-varieties, and Ax's theorem}
\date{\today}
\author{Anand Pillay\thanks{Supported by NSF grants  DMS-2054271 and DMS-2502292}\\{University of Notre Dame}}

\newtheorem{Theorem}{Theorem}[section]
\newtheorem{Proposition}[Theorem]{Proposition}
\newtheorem{Definition}[Theorem]{Definition}
\newtheorem{Remark}[Theorem]{Remark}
\newtheorem{Lemma}[Theorem]{Lemma}
\newtheorem{Corollary}[Theorem]{Corollary}
\newtheorem{Fact}[Theorem]{Fact}

\newcommand{\Q}{\mathbb Q}  
\newcommand{\Z}{\mathbb Z}

\newcommand{\C}{\mathbb C}

\begin{document}
\maketitle

\begin{abstract} 
We discuss the relationship between   Lie derivatives and the linear differential equations on cotangent spaces of algebraic $D$-varieties at sharp points (from  \cite{Pillay-Ziegler}). We also  take the  liberty to give an account of Ax's theorem  (Ax-Schanuel for the exponential map)

\end{abstract}

\section{Introduction}
This note is partially motivated by discussions in a model theory meeting in Freiburg (November 2023) around Lie derivatives and their role in the proof of Ax's theorem and related results, following a talk by R\'emi Jaoui and comments by Piotr Kowalski.  The  key object is a certain ``linear differential equation" on a certain space of K\"ahler differentials. On the other hand, in \cite{Pillay-Ziegler}  we introduced a certain linear differential equation on the cotangent space and higher cotangent spaces of an algebraic $D$-variety at a so-called $\sharp$-point as  a tool for giving somewhat direct accounts of function field Mordell-Lang in characteristic $0$ as well as the Zilber dichotomy for strongly minimal sets in differentially closed fields.  These seemed to be closely related.  In this note, given a rational  algebraic $D$-variety $(X,\partial ')$ over a differential field $K$, we point out how to deduce the Lie derivative on the  space $\Omega_{K}(X)$ and its properties, from these linear differential equations on the cotangent spaces to $X$ at ``sharp"-points.   (Also extended to proalgebraic $D$-varieties.) This is the only novelty of the note (if there is any novelty).  See Corollary 2.3.

Another motivation comes from a topics course on the model theory of differential fields, taught by me at Notre Dame in Autumn 2025.  I had planned to finish the course with a proof of Ax's theorem (for integer powers of the multiplicative group).  However I only understood what was going on (and the relation to \cite{Pillay-Ziegler}) after the course was finished.  So this note represents in  a sense the completion of the course.  Also maybe this will fill a gap in the literature in the sense that  I  could not find a clear conceptual account of Ax's original theorem from \cite{Ax}. (Although Dave Marker has seminar notes on Ax's theorem \cite{Marker}  with an efficient proof making use of ideas of Rosenlicht.) 
So this note may also help as an entry point for students, for example, to Ax's theorem.

Let us mention in passing that algebraic $D$-varieties $(X,D)$ correspond (by passing to the set  $(X,D)^{\sharp}$ of $\sharp$-points) to finite dimensional differential algebraic varieties, and the solution set of the linear differential on the cotangent space at a sharp point of $X$, is precisely the Kolchin cotangent space of the differential algebraic variety $(X,D)^{\sharp}$. at the relevant point.  An analogous thing holds for arbitrary (not necessarily finite-dimensional) differential algebraic varieties.

Our algebraic geometric notation is standard (such as in \cite{Shafarevich}).  For the relevant model theory of differential fields see \cite{Pillay-Ziegler}. 

Many thanks to Piotr Kowalski for his suggestions and comments. 

\subsection{Rational differential forms.}
Let us begin with recalling `the ``classical" notion of (rational) differential $1$-form.
We work in characteristic $0$, and take $K$ to be an algebraically closed field. Let $X$ be an irreducible algebraic variety over $K$, affine if one wishes ($X\subseteq K^{n}$) , and let $K(X)$ be the field of rational functions on $K$.
If $\alpha$ is a generic point of $X$ over $K$ then $K(X)$  coincides  with $K(\alpha)$   (for example by the map taking the coordinate variables $x = (x_{1},..,x_{n})$ on $X$ to $\alpha$).

Recall that for $\alpha\in X(K)$ the tangent space to $X$ at $\alpha$ is the vector subspace of $K^{n}$ defined by $\sum_{i=1,..,n} (\partial P/\partial x_{i})(\alpha) u_{i} = 0$ as $P$ ranges over a finite set of generators of $I_{K}(X)$. 
The cotangent space to $X$ at $\alpha$ is the dual space $T(X)_{\alpha}^{*}$ of linear forms on $T(X)_{\alpha}$.

Given $\alpha\in X(K)$, the local ring ${\mathcal O}_{K}(X)_{\alpha}$ of $X$ at $\alpha$ is the set of $f\in K(X)$ which are defined at $\alpha$. For such $f $, $df_{\alpha}$ is the linear form 
$\sum_{i=1,..,n}(\partial f/\partial x_{i})(\alpha)u_{i}$.  Note that if $f(\alpha) = c\in K$, then $df_{\alpha} = d(f-c)_{\alpha}$, whereby we may restrict to those $f$ in  the maximal ideal $ {\mathcal M}_{\alpha}$ consisting of  elements of the local ring at $\alpha$ which are $0$ at $\alpha$.

\begin{Fact} $f\to df_{\alpha}$ defines an isomorphism of $K$-vector spaces between ${\mathcal M}_{\alpha} /{\mathcal M}_{\alpha}^{2}$ and $T(X)_{\alpha}^{*}$ where ${\mathcal M}_{\alpha}^{2}$ is the ideal in the local ring at $\alpha$ generated by all products $fg$ of $f,g\in {\mathcal M}_{\alpha}$.
\end{Fact}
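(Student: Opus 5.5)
The plan is to reduce everything to polynomials and to the local ring of affine space at $\alpha$. Write $x=(x_1,\dots,x_n)$, let $A=K[x]$, let $I=I_K(X)$ with generators $P_1,\dots,P_m$, and let $\mathfrak m\subseteq A$ be the maximal ideal of $\alpha$, generated by the $x_i-\alpha_i$.

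\emph{Step 1: the map is well defined, linear, and kills ${\mathcal M}_\alpha^2$.} Any $f$ in the local ring can be written $f=g/h$ with $g,h\in A$ and $h(\alpha)\neq 0$. The quotient rule shows that $df_\alpha$ depends only on $f$ as an element of $K(X)$. Indeed, if $g/h$ and $g'/h'$ agree on $X$, then $gh'-g'h\in I$. The differential of any element of $I$ at $\alpha$ vanishes on $T(X)_\alpha$, since it is an $A$-combination of the $P_j$ and each $dP_j$ vanishes there by definition. Linearity of $f\mapsto df_\alpha$ is clear. The Leibniz rule gives $d(fg)_\alpha=f(\alpha)dg_\alpha+g(\alpha)df_\alpha$, which is $0$ when $f,g\in{\mathcal M}_\alpha$. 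So we get a well-defined map ${\mathcal M}_\alpha/{\mathcal M}_\alpha^2\to T(X)_\alpha^*$.

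\emph{Step 2: surjectivity.} Every linear form on $T(X)_\alpha\subseteq K^n$ is the restriction of a linear form $\sum c_iu_i$ on $K^n$. That form is exactly $d f_\alpha$ for $f=\sum c_i(x_i-\alpha_i)$, restricted to $X$, and this $f$ lies in ${\mathcal M}_\alpha$.

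\emph{Step 3: injectivity.} This is the main step. Since $h(\alpha)\ne 0$ and $1/h$ is a unit of the local ring, it suffices to treat the numerator: if $f=g/h$ with $g(\alpha)=0$, then $df_\alpha=h(\alpha)^{-1}dg_\alpha$, and $f\in{\mathcal M}_\alpha^2$ iff $g\in{\mathcal M}_\alpha^2$. So take $g\in\mathfrak m$ with $dg_\alpha$ vanishing on $T(X)_\alpha$. By linear algebra, $T(X)_\alpha$ is the common kernel of the forms $dP_{j,\alpha}$. Hence $dg_\alpha=\sum_j c_j\,dP_{j,\alpha}$ as linear forms on $K^n$, with $c_j\in K$. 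Put $g'=g-\sum c_jP_j$. Then $g'$ agrees with $g$ on $X$, $g'(\alpha)=0$ (because $\alpha\in X$), and all first partials of $g'$ vanish at $\alpha$. Taylor expansion of the polynomial $g'$ around $\alpha$ then shows $g'\in\mathfrak m^2$. Restricting to $X$, the image of $\mathfrak m^2$ lands in ${\mathcal M}_\alpha^2$, so $f=g/h\in{\mathcal M}_\alpha^2$.

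The only delicate point is the linear-algebra step expressing $dg_\alpha$ as a combination of the $dP_{j,\alpha}$ (annihilator of a common kernel), together with the reduction from $K(X)$ to polynomials. Everything else is routine use of the quotient rule and Taylor expansion.
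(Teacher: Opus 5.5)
Your proof is correct: well-definedness via the quotient rule and the vanishing of $dQ_\alpha$ on $T(X)_\alpha$ for $Q\in I_K(X)$, surjectivity via the linear polynomials $\sum c_i(x_i-\alpha_i)$, and injectivity via the annihilator argument plus Taylor expansion is the standard textbook argument (as in Shafarevich, on whom the paper relies). The paper gives no proof of this Fact and simply quotes it as standard, so your argument supplies what the paper takes for granted.
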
 

Note that if $f_{1},..,f_{r}\in {\mathcal O}(X)_{\alpha}$ and $c_{1},..,c_{r}\in K$ then $\sum_{i=1,..,r}c_{i}df_{i} = d(\sum_{i=1,..,r}c_{i}f_{i})$. 

We denote by $\Omega(X)$ or $\Omega_{K}(X)$ the $K(X)$-vector space of rational differential forms (over $K$ on $X$).  Recall that this means that  ${\Omega}(X)$ is the collection of objects $\omega = \sum_{i=1,..,m}f_{i}dg_{i}$ where $f_{i}, g_{i}\in K(X)$. 
For any $\alpha\in X(K)$ at which each $f_{i}$ and $g_{i}$ are defined,  $\omega_{\alpha} = \sum_{i=1,..,n}  f_{i}(\alpha)(dg_{i})_{\alpha} \in T(X)_{\alpha}^{*}$.  More precisely for $g\in K(X)$ defined at $\alpha$, $dg_{\alpha}$ is the linear form 
on $T(X)_{\alpha}$ defined above, and extend to arbitrary $\omega_{\alpha}$ by linearity. 

We consider two such rational differential differential forms $\omega_{1}$, $\omega_{2}$ on $X$ to be equal if $(\omega_{1})_{\alpha}  =( \omega_{2})_{\alpha}$ on a Zariski open subset of $X(K)$. 
See Section 5.2 of  Chapter III of \cite{Shafarevich} for the following: 
\begin{Fact} $\Omega(X)$ is an $n$-dimensional vector space over $K(X)$, which is moreover defined by the relations $d(f+g) = df + dg$, $d(fg) = fdg + gdf$, and $d(a)= 0$ for $a\in K$ (i.e. for constant functions). 
\end{Fact}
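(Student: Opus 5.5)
I read the $n$ in the statement as $\dim X$, the transcendence degree of $K(X)$ over $K$; for $X\subseteq K^{n}$ of smaller dimension the literal $n$ cannot be right. Write $d=\dim X$. The plan is to compare $\Omega(X)$, defined pointwise via cotangent spaces, with the abstract module $\Omega_{K(X)/K}$ of Kähler differentials. The latter is the $K(X)$-vector space generated by symbols $dg$ subject only to additivity, the Leibniz rule and $d(a)=0$ for $a\in K$.

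First, there is a natural surjective $K(X)$-linear map $\Omega_{K(X)/K}\to\Omega(X)$. It exists because the three relations hold pointwise: at any $\alpha$ where everything is defined, $d(f+g)_{\alpha}=df_{\alpha}+dg_{\alpha}$, $d(fg)_{\alpha}=f(\alpha)dg_{\alpha}+g(\alpha)df_{\alpha}$, and $da_{\alpha}=0$. These follow from the corresponding rules for $\partial/\partial x_{i}$ together with the fact that a rational function is locally a quotient of polynomials. The map is surjective by the definition of $\Omega(X)$. So it suffices to show two things: $\dim_{K(X)}\Omega_{K(X)/K}\le d$, and the images of some $d$ elements are linearly independent in $\Omega(X)$. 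Together these force the map to be an isomorphism, which gives both the dimension count and the presentation by the stated relations.

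For the upper bound, since the characteristic is $0$, choose a transcendence basis $t_{1},\dots,t_{d}$ of $K(X)$ over $K$; it is automatically separating. For any $g\in K(X)$, let $P(t,y)$ be the minimal polynomial of $g$ over $K(t)$, cleared of denominators. Then $\partial P/\partial y(t,g)\neq 0$ by separability. Applying the Kähler relations to $P(t,g)=0$ gives
$$dg=-\Big(\frac{\partial P}{\partial y}(t,g)\Big)^{-1}\sum_{i=1}^{d}\frac{\partial P}{\partial t_{i}}(t,g)\,dt_{i}.$$
Hence $dt_{1},\dots,dt_{d}$ span $\Omega_{K(X)/K}$, and therefore also span $\Omega(X)$.

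For independence in $\Omega(X)$, suppose $\sum_{i}h_{i}\,dt_{i}=0$, i.e. the forms $\sum_{i}h_{i}(\alpha)(dt_{i})_{\alpha}$ vanish on a Zariski open set. I then need a nonempty open $U\subseteq X$ on which three things hold: $\alpha$ is a smooth point, so $\dim T(X)_{\alpha}=d$; the $t_{i}$ are regular; and $(dt_{1})_{\alpha},\dots,(dt_{d})_{\alpha}$ are linearly independent in $T(X)_{\alpha}^{*}$. The last condition says the differential of the dominant map $t:X\to\mathbb{A}^{d}$ is an isomorphism at $\alpha$. Given such a $U$, each $h_{i}$ vanishes on a dense open set, so $h_{i}=0$ in $K(X)$.

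The main obstacle is producing $U$, which is generic smoothness of $X$ together with generic étaleness of $t$ in characteristic $0$. I would get it concretely. Use the expression for $dx_{j}$ in terms of the $dt_{i}$ coming from the minimal polynomials of the coordinates $x_{j}$ over $K(t)$; it holds pointwise on an open set, by the same computation done pointwise. So every $(dx_{j})_{\alpha}$, and hence every linear form on $T(X)_{\alpha}$ restricted from $K^{n}$, lies in the span of the $(dt_{i})_{\alpha}$. Thus $d\ge\dim T(X)_{\alpha}$ there. Combined with $\dim T(X)_{\alpha}\ge d$ at every point (upper semicontinuity and the definition of dimension, as in Shafarevich), this gives equality, so the $d$ forms $(dt_{i})_{\alpha}$ span a $d$-dimensional dual space and are independent.
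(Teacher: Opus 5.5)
Your proof is correct, and reading $n$ as $\dim X$ is the right interpretation; that is how the Fact is used in the proof of Proposition 3.1. The paper gives no argument of its own and defers to Shafarevich. The treatment there runs through local parameters $u_1,\dots,u_d$ at a nonsingular point: their differentials form a basis of ${\mathcal M}_\alpha/{\mathcal M}_\alpha^2\cong T(X)_\alpha^*$, hence of the cotangent spaces on a neighbourhood, and every form is rewritten in terms of the $du_i$. The identification with K\"ahler differentials is a separate step.

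You instead use a separating transcendence basis. Spanning then becomes pure field theory via minimal polynomials, and the surjection $\Omega_{K(X)/K}\to\Omega(X)$ yields both the dimension and the presentation by the three relations at once. Geometry enters only through the independence of the $(dt_i)_\alpha$ at generic points. Shafarevich's route gives more, namely local freeness of regular forms near smooth points. Yours is shorter for the function-field statement the paper actually needs.

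One caveat concerns your parenthetical justification of $\dim T(X)_\alpha\ge\dim X$. Upper semicontinuity only makes $\{\alpha:\dim T(X)_\alpha\ge k\}$ closed, and the inequality does not follow from the definition of dimension. In Shafarevich it needs a birational reduction to a hypersurface. So it is a genuine citation, which is fine.

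You can also bypass it. By separability there are $K$-derivations $D_j$ of $K(X)$ with $D_j(t_i)=\delta_{ij}$. Applying $D_j$ to $P(x)=0$ for $P\in I_K(X)$ shows that $v_j=(D_jx_1,\dots,D_jx_n)$, evaluated at a generic $\alpha$, lies in $T(X)_\alpha$. The chain rule then gives $(dt_i)_\alpha(v_j(\alpha))=\delta_{ij}$. This proves independence directly, and together with your spanning step it yields generic smoothness as a byproduct.
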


The general notion and theory of K\"ahler differentials was introduced to give a purely algebraic account of differential forms (rational as well as regular). $\Omega_{K}(X)$ as defined above coincides with the $K(X)$-vector space of K\"ahler differentials (over $K$), $\Omega(K(X)/K)$.

We will be discussing proalgebraic varieties. For our purposes a proalgebraic variety over $K$ is an inverse limit  $X_{\infty} = \varprojlim X_{n}$ of (affine irreducible) algebraic varieties $X_{n}$ over $K$ where for each $n$ we have a dominant morphism $\pi_{n}:X_{n+1}\to X_{n}$.  Then we naturally have inclusions $K(X_{n}) \subseteq K(X_{n+1})$ of function fields, and the union is $K(X_{\infty})$. Likewise we have inclusions of the $\Omega_{X_{n}}$ whose union is $\Omega_{K}(X_{\infty})$ a $K(X_{\infty})$-vector space.

\subsection{Rational $D$-varieties.}  
We now assume that the algebraically closed field $K$ is equipped with a derivation $\partial$. So $(K,\partial)$ is an algebraically closed (ordinary) differential field.    We can embed $(K,\partial)$ in a (saturated) differentially closed field, ${\mathcal U}$, and sometimes it is convenient to assume $(K,\partial)$ itself to be differentially closed. 

Algebraic $D$-varieties were essentially introduced by Buium \cite{Buium} as a tool for understanding finite-dimensional sets  and groups definable in differentially closed fields. But see also \cite{Pillay-Ziegler} and \cite{Pillay-Tehran}.

Let again $X$ be an irreducible algebraic variety over $K$ living in affine $n$-space,  as in 1.1.  The ``shifted" tangent bundle of $X$ is a torsor for the tangent bundle of $X$.
\begin{Definition} $T_{\partial}(X)$ is the variety in affine $2n$-space defined  over $K$  by the set of polynomial equations $P({\bar x}) = 0$ defining $X$ together with
the set of equations  $\sum_{i=1,...,n} (\partial P/\partial x_{i})({\bar x})u_{i} +  P^{\partial}({\bar x}) = 0$, as $P$ ranges over a generating set of $I_{K}(X)$ and $P^{\partial}$ is the result of hitting the coefficients of $P$ by $\partial$.
\end{Definition} 

When $K$ is a field of constants, $T_{\partial}(X)$ coincides with $T(X)$ (the tangent bundle of $X$).

\begin{Definition} 
By the structure of a rational $D$-variety on $X$ over $K$ we mean an extension $\partial '$ of  the derivation $\partial$ of $K$  to a derivation of the function field $K(X)$.
\end{Definition}

\begin{Fact} (i) Let $(X,\partial '$) be a rational $D$-variety over $K$.  Let $x_{1},..,x_{n}$ be the coordinate functions on $X$, and let $\partial ' (x_{i}) = s_{i}({\bar x}) \in K(X)$. Then 
$(s_{1},...,s_{n})$ is a rational section of $T_{\partial}(X) \to X$ defined over $K$. 
\newline
(ii) Conversely if ${\bar s}({\bar x}) = (s_{1}({\bar x}),..,s_{n}({\bar x}))$ is a rational section of $T_{\partial}(X) \to X$ defined over $K$ then defining $\partial ' (f) = \sum_{i=1,..,n}(\partial f/\partial x_{i})({\bar x})s_{i}({\bar x}) + f^{\partial}({\bar x})$  for $f\in K(X)$ gives a rational $D$-variety structure on $X$. Namely $\partial '$ is a derivation of $K(X)$ extending $\partial$. 
\end{Fact}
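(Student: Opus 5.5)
For (i), I will apply the derivation $\partial'$ to each defining equation. Let $P$ be in a generating set of $I_K(X)$. In $K(X)$ we have $P(x_1,\dots,x_n)=0$. Since $\partial'$ is a derivation of $K(X)$ extending $\partial$ on $K$, the Leibniz and additivity rules, applied monomial by monomial, give
\[
0=\partial'(P(\bar x))=\sum_{i=1}^n (\partial P/\partial x_i)(\bar x)\,\partial'(x_i)+P^{\partial}(\bar x).
\]
Here the term $P^\partial$ comes from $\partial'$ acting on the coefficients, which lie in $K$. Substituting $\partial'(x_i)=s_i(\bar x)$ shows that $(\bar x,\bar s(\bar x))$ satisfies all the equations of $T_\partial(X)$ as an identity in $K(X)$. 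Choose a nonempty Zariski open set on which the $s_i$ are defined. At any $\alpha$ in that set, evaluating the identity gives $(\alpha,\bar s(\alpha))\in T_\partial(X)$, so $\bar s$ is a rational section. Working at a generic point $\alpha$ over $K$ makes this immediate, since then $K(X)=K(\alpha)$.

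For (ii), I will first define $\partial'$ on the polynomial ring $K[\bar x]$ by $D(f)=\sum_i(\partial f/\partial x_i)s_i+f^\partial$, with values in $K(X)$. A direct check shows this is additive and satisfies Leibniz: the map $f\mapsto f^\partial$ is a derivation of $K[\bar x]$ extending $\partial$, the partial derivatives are derivations, and a sum of derivations into the module $K(X)$ is again a derivation. Next I need $D$ to vanish on $I_K(X)$. Write $f=\sum g_jP_j$ with $P_j$ generators. Then
\[
D(f)=\sum_j D(g_j)P_j+\sum_j g_j D(P_j).
\]
The first sum vanishes because $P_j=0$ on $X$. The second vanishes because $D(P_j)$ restricted to $X$ is exactly the defining equation of $T_\partial(X)$ evaluated at the section $\bar s$, which is zero as a rational function since $\bar s$ is a section. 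Hence $D$ descends to a derivation $K[X]\to K(X)$.

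Finally, I will extend $D$ to the fraction field by the quotient rule, $\partial'(f/g)=(gD(f)-fD(g))/g^2$. I will check well-definedness in the standard way: if $f/g=f_1/g_1$, then $fg_1=f_1g$, and applying $D$ to this equation gives the required equality. The extension is again a derivation, and it restricts to $\partial$ on $K$ because $D(a)=a^\partial=\partial a$ for constants $a\in K$. The formula in the statement for arbitrary $f\in K(X)$ agrees with this extension, since the quotient rule is compatible with the chain rule for partial derivatives and for $(\cdot)^\partial$.

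The main obstacle is the well-definedness modulo $I_K(X)$. It hinges on the generators: $D(P)$ is only known to vanish on $X$ for the generators $P$, and the Leibniz computation above is what extends this to the whole ideal $I_K(X)$. The remaining steps are routine.
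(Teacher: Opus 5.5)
The paper gives no proof of this statement. It is recorded as a standard Fact, like Facts 1.1 and 1.2, and used without argument, so there is no argument of the paper's to compare yours against.

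Your proof is correct and is the standard verification:
\begin{itemize}
\item In (i), applying $\partial'$ to the identity $P(\bar x)=0$ in $K(X)$ produces exactly the defining equations of $T_\partial(X)$ at $(\bar x,\bar s(\bar x))$.
\item In (ii), $D=\sum_i s_i\,\partial/\partial x_i+(\cdot)^{\partial}$ is a derivation $K[\bar x]\to K(X)$. It kills each generator of $I_K(X)$ precisely because $\bar s$ is a section of $T_\partial(X)$, so by Leibniz it kills all of $I_K(X)$. It then descends to $K[X]$ and extends uniquely to $K(X)$ by the quotient rule.
\end{itemize}

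One point deserves to be made explicit, since the paper applies the formula to rational $f$ (for instance $f\in\mathcal O(X)_\alpha$ in Lemma 2.2). For $f=p/q$, the term $f^{\partial}$ must mean the value of the derivation of $K(\bar x)$ that extends $\partial$ and kills the $x_i$, namely $(q\,p^{\partial}-p\,q^{\partial})/q^{2}$. It cannot be read literally as ``hitting the coefficients'' of $p$ and $q$. Also, the formula is evaluated on a lift of $f$ to $K[\bar x]_{I_K(X)}$. Your quotient-rule compatibility remark is exactly what shows the result does not depend on the chosen lift.
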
 

So we can write a rational $D$ variety over $K$ as either $(X,\partial')$ or $(X,{\bar s})$ where ${\bar s}$ is as in Fact 1.5 (i). 

\begin{Definition}  Let $(X,{\bar s})$ be a rational $D$ variety over $K$. Then by a $\sharp$-point of $X(K)$ we mean some $\alpha = (\alpha_{1},..,\alpha_{n})\in X(K)$ at which ${\bar s}$ is defined and with $\partial (\alpha) = {\bar s}(\alpha)$ (coordinatewise). 
\end{Definition}

The collection of ${\sharp}$-points of $X(K)$ is a quantifier-free definable set in $(K,\partial)$. It may be empty, but:
\begin{Fact}  If $(K,\partial)$ is differentially closed and $(X,{\bar s})$ is a rational $D$-variety over $K$ then the set of $\sharp$-points of $X(K)$ is Zariski-dense in $X(K)$. 
\end{Fact}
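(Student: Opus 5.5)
To prove Fact 1.7 we use the geometric axioms for differentially closed fields (Pierce--Pillay). For every irreducible variety $V$ over $K$ and every irreducible subvariety $W\subseteq T_{\partial}(V)$ projecting dominantly onto $V$, and every proper subvariety $Z \subsetneq W$, there is $a\in V(K)$ with $(a,\partial(a))\in W(K)\setminus Z(K)$. The plan is to take $V = X$ and $W$ the Zariski closure of the graph of the rational section $\bar s$, and then read off Zariski density directly from the axioms.

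First we check that $W$ qualifies. Let $U\subseteq X$ be a nonempty Zariski open set, defined over $K$, on which $\bar s$ is regular. By Fact 1.5(i), $(\bar x,\bar s(\bar x))$ lies in $T_{\partial}(X)$ on $U$. The graph $\Gamma=\{(\bar x,\bar s(\bar x)):\bar x\in U\}$ is isomorphic to $U$ via projection, so it is irreducible. Hence $W=\overline{\Gamma}$ is an irreducible subvariety of $T_\partial(X)$ projecting dominantly (indeed birationally) onto $X$.

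Next, fix an arbitrary nonempty open $O\subseteq X$; we must find a $\sharp$-point in $O$. Let $Z = W\setminus \pi^{-1}(O\cap U)$, where $\pi:W\to X$ is the projection. This $Z$ is a proper closed subset of $W$, since $\Gamma\cap\pi^{-1}(O\cap U)$ is nonempty. The geometric axiom then gives $a\in X(K)$ with $(a,\partial a)\in W\setminus Z$, so $a\in O\cap U$. Over $U$, the fibre of $W$ is the single point $\bar s(a)$: $\Gamma$ is closed in $\pi^{-1}(U)$, being the graph of a morphism $U\to \mathbb{A}^n$. Hence $\partial(a)=\bar s(a)$, $\bar s$ is defined at $a$, and $a$ is a $\sharp$-point in $O$.

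The main obstacle is the first ingredient, the geometric axiom itself, if one does not take it as known. In that case I would derive it from differential closedness via the standard argument. Take a generic point $\alpha$ of $X$ over $K$ in a field extension, and extend $\partial$ to $K(\alpha)$ by $\partial'$, setting $\partial\alpha=\bar s(\alpha)$; this is well defined by Fact 1.5(ii). Then $(K(\alpha),\partial')$ is a differential field extension of $K$ containing a point of $X\cap O$ satisfying $\partial x=\bar s(x)$. This is a quantifier-free system over $K$, so since $K$ is existentially closed, such a point exists in $K$.
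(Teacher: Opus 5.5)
Your proposal is correct and follows the paper. The paper gives the same two justifications: it cites the geometric axioms, and it realizes a generic point $\alpha$ of $X$ with $\partial'(\alpha)=\bar s(\alpha)$ in a differential field extension, then pulls a point in any given open set down to $K$ (the paper says $K\prec\mathcal U$, you say existential closedness), so your second argument already proves the Fact without the axioms, and your check that the closure of the graph of $\bar s$ has single-point fibres over the domain of $\bar s$ fills in a detail the paper leaves implicit.
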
 
\begin{proof} On the the one hand this is part of the so-called ``geometric axioms" for $DCF_{0}$. On the other hand it is an immediate consequence of the definitions, and model completeness of
 $DCF_{0}$:
So consider the associated derivation $\partial '$ on $K(X)$ extending $\partial$. Let  $\alpha$ be a generic over $K$ point of $X$ in a larger field $L$. So $K(\alpha)$ is isomorphic to $K(X)$ (take $\alpha = (\alpha_{1},..,\alpha_{n})$ to the coordinate functions $x_{1},..,x_{n}$). So  via this isomorphism $\partial '$ gives a derivation of $K(\alpha)$  which extends $\partial$ on $K$ and such that $\partial '(\alpha) = {\bar s}(\alpha)$. 
We can embed $(K(\alpha),\partial ')$ into $({\mathcal U},\partial)$ over $K$, namely we may assume already that $K(\alpha) < {\mathcal U}$ and ${\partial} ' = \partial |K(\alpha)$.
 But $\alpha$ is a generic point of $X$ over $K$, so as $K$ is an elementary substructure of ${\mathcal U}$ for any nonempty Zariski open subset $U$of $X(K)$, there is $\beta \in U$ with $\partial(\beta) = {\bar s}(\beta)$. 

\end{proof} 

Again we can make sense of a rational proalgebraic $D$-variety  $(X_{\infty},\partial ')$ = $(X_{\infty}, {\bar s})$ over a differential field $K$. However this is {\em not} on the face of it simply an inverse limit of rational $D$-varieties. It is 
rather a rational $D$-variety $X_{\infty}$ over $K$ as in Section 1.1, together with lifting of the derivation $\partial$ on $K$ to a derivation $\partial '$ of the function field $K(X_{\infty})$.  The various Facts above hold in this more general 
context, as well as Fact 1.7, although may require $K$ to be somewhat saturated.

\section{The ``Lie derivative"}

First recall that given say a field $K$ with a derivation $\partial$, by a $\partial$-module over $K$ we mean a $K$-vector space $V$ equipped with an additive endomorphism $D_{V}: V\to V$ such that for any $c\in K$ and $v\in V$,
$D_{V}(c\cdot v) = c\cdot D_{V}(v) + \partial (c)\cdot v$. The set of solutions of $D_{V}(v) = 0$ is a $C_{K}$-vector space where $C_{K}$ is the field of constants of $K$. 

 A key point is:
\begin{Fact} Let $v_{1},..,v_{m}\in V$ be solutions of $D_{V} = 0$. Then the $v_{i}$ are $K$-linearly dependent iff they are $C_{K}$-linearly independent. 
\end{Fact}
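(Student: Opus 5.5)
The statement as printed says ``dependent iff independent''. It is clearly meant to read: solutions $v_{1},\dots,v_{m}$ of $D_{V}=0$ are $K$-linearly independent iff they are $C_{K}$-linearly independent; equivalently, $K$-linearly dependent iff $C_{K}$-linearly dependent. I will prove it in this form.

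One direction is immediate. Since $C_{K}\subseteq K$, a nontrivial $C_{K}$-linear relation among the $v_{i}$ is already a nontrivial $K$-linear relation. So $C_{K}$-dependence implies $K$-dependence.

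For the converse I will use the standard minimal-counterexample (Wronskian-style) argument. Suppose the $v_{i}$ are $K$-linearly dependent. Among all nontrivial $K$-linear relations among them, choose one of minimal length. After renumbering and dividing by a nonzero coefficient, it has the form
$$v_{1} + c_{2}v_{2} + \cdots + c_{k}v_{k} = 0, \qquad c_{i}\in K .$$
Apply $D_{V}$. It is additive and satisfies $D_{V}(c v) = cD_{V}(v) + \partial(c)v$. Also $D_{V}(v_{i})=0$ for every $i$, and the coefficient of $v_{1}$ is the constant $1$. Hence
$$\partial(c_{2})v_{2} + \cdots + \partial(c_{k})v_{k} = 0 .$$
This is a $K$-linear relation of length strictly less than $k$. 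By minimality it must be trivial, so $\partial(c_{i})=0$ for all $i$, i.e.\ every $c_{i}\in C_{K}$. The original relation is therefore a nontrivial $C_{K}$-linear relation, which gives $C_{K}$-dependence.

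The only step needing care is the normalization: scaling so that one coefficient equals $1$ is what makes the derived relation strictly shorter. Once that is done, the rest is the routine computation above. The argument uses nothing about $V$ beyond the $\partial$-module axioms, so it applies verbatim to the cotangent spaces and to $\Omega_{K}(X)$ later in the paper.
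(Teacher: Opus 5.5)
Your proof is correct and is essentially the paper's argument: reduce to a minimal $K$-linear relation normalized as $v_{1}+c_{2}v_{2}+\cdots+c_{k}v_{k}=0$, apply $D_{V}$ to get a relation among fewer of the $v_{i}$ that must be trivial, and conclude $\partial(c_{i})=0$, i.e.\ $c_{i}\in C_{K}$. You also correctly read ``independent'' in the statement as a typo for ``dependent'', and you supply the trivial converse direction that the paper leaves implicit.
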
 
\begin{proof}   We may assume that no proper subset of $\{v_{1},..,v_{m}\}$ is $K$-linearly dependent
Let $c_{2},..,c_{m}\in K$ not all $0$ such that $v_{1} + c_{2}v_{2} + ... + c_{m}v_{m} = 0$.  Applying $D_{V}$ and using that $D_{V}(v_{i}) = 0$ for all $i$ we see that
$\partial(c_{2}) = ... = \partial (c_{m}) = 0$, hence the $c_{i}\in C_{K}$. 
\end{proof} 

When $V$ is finite-dimensional and $K$ is differentially closed one can find a $K$-basis of $V$ consisting of solutions of $D_{V} = 0$. 

We  will describe how given a rational $D$-variety $(X,\partial ')$ over a differential field  $(K,\partial)$,  the derivation $\partial '$ of $K(X)$ directly induces a the structure of a $\partial '$-module on the $K(X)$-vector space $\Omega_{K}(X)$ of rational differential forms. 

This can be accomplished in different ways (such as purely algebraically) but it is amusing (and this is the main point) to make use of \cite{Pillay-Ziegler} where we did the same thing, but working instead with the
the cotangent space  $T(X)_{\alpha}^{*}$ of $X$ at a $\sharp$-point $\alpha$ of $X(K)$.  We summarise. 

So we assume that $(X, \partial ') = (X, {\bar s})$ is  a rational $D$-variety over $K$ 
$\alpha = (\alpha_{1},..,\alpha_{n})$ is a $\sharp$-point of $X(K)$ and let $V = T(X)_{\alpha}^{*}$, a $K$-vector space. 
\begin{Lemma} Let $f\in {\mathcal O}(X)_{\alpha}$. 
\newline
(i) Then ${\partial} '(f)\in {\mathcal O}(X)_{\alpha}$, and $d({\partial '}(f))_{\alpha}$ depends only on $df_{\alpha}$.
\newline
(ii) Define $D_{V}:V\to V$ by $D_{V}(df_{\alpha}) = d(\partial ' f)_{\alpha}$. Then $(V,D_{V})$ is a $\partial$-module over $K$. 
\end{Lemma}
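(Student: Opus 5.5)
Throughout, I use Fact 1.3, i.e.\ $\mathcal M_\alpha/\mathcal M_\alpha^2\cong T(X)^*_\alpha$ via $f\mapsto df_\alpha$, and reduce to $f\in\mathcal M_\alpha$, since $df_\alpha=d(f-f(\alpha))_\alpha$. The approach is to show that $\partial'$ maps $\mathcal O(X)_\alpha$ into itself, maps $\mathcal M_\alpha$ into $\mathcal M_\alpha$ up to a correction, and maps $\mathcal M_\alpha^2$ into $\mathcal M_\alpha^2$ modulo constants. This gives a well-defined map on $\mathcal M_\alpha/\mathcal M_\alpha^2$.

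For the first part of (i), I use the formula of Fact 1.5(ii): $\partial'(f)=\sum_i(\partial f/\partial x_i)s_i+f^{\partial}$. Write $f=g/h$ with $g,h$ polynomials and $h(\alpha)\neq 0$. Then $\partial f/\partial x_i$ and $f^\partial$ are again quotients with denominators powers of $h$. The $s_i$ are defined at $\alpha$ by the definition of a $\sharp$-point. Hence $\partial'(f)\in\mathcal O(X)_\alpha$.

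The key computation is evaluation at $\alpha$. I claim $\partial'(f)(\alpha)=\partial(f(\alpha))$ for $f\in\mathcal O(X)_\alpha$. For polynomials this is the chain rule: $\partial(f(\alpha))=\sum_i(\partial f/\partial x_i)(\alpha)\partial(\alpha_i)+f^\partial(\alpha)$, and $\partial\alpha_i=s_i(\alpha)$. For quotients it follows since both sides obey the quotient rule. Consequently, if $f\in\mathcal M_\alpha$ then $\partial'(f)\in\mathcal M_\alpha$, because $\partial(0)=0$. If $f=\sum g_jh_j$ with $g_j,h_j\in\mathcal M_\alpha$, then $\partial'(f)=\sum(\partial'(g_j)h_j+g_j\partial'(h_j))\in\mathcal M_\alpha^2$. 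So $\partial'$ induces a map $\mathcal M_\alpha/\mathcal M_\alpha^2\to\mathcal M_\alpha/\mathcal M_\alpha^2$, and through Fact 1.3 a well-defined $D_V$. For general $f$ with $f(\alpha)=c$, $\partial'(f-c)=\partial'(f)-\partial(c)$, and $\partial(c)$ is a constant function with zero differential. So $d(\partial'f)_\alpha$ depends only on $df_\alpha$, which proves (i).

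For (ii), additivity is clear from the additivity of $\partial'$ and $d$. For $c\in K$, $c\,df_\alpha=d(cf)_\alpha$. Then $\partial'(cf)=c\,\partial'(f)+\partial(c)f$, so $d(\partial'(cf))_\alpha=c\,d(\partial'f)_\alpha+\partial(c)\,df_\alpha$. Here the differential of a $K$-linear combination is computed termwise, and $d(\partial(c)f)_\alpha=\partial(c)df_\alpha$ since $\partial(c)\in K$ is a scalar. This gives $D_V(cv)=cD_V(v)+\partial(c)v$. The main obstacle, as I see it, is the evaluation identity $\partial'(f)(\alpha)=\partial(f(\alpha))$. It is what makes $\partial'$ preserve $\mathcal M_\alpha$, and it genuinely uses the $\sharp$-point hypothesis; the rest is routine.
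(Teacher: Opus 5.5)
Your proof is correct and follows the paper's argument. The $\sharp$-point identity $\partial'(f)(\alpha)=\partial(f(\alpha))$ makes $\partial'$ preserve $\mathcal M_\alpha$ and $\mathcal M_\alpha^2$, so $\partial'$ descends to $\mathcal M_\alpha/\mathcal M_\alpha^2\cong T(X)^*_\alpha$ (this isomorphism is Fact 1.1 in the paper, not Fact 1.3); beyond that, you only spell out the chain-rule check of that identity and the Leibniz rule $D_V(cv)=cD_V(v)+\partial(c)v$, which the paper calls ``clear''.
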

\noindent
{\em Explanation.}    (i) As   $\partial '(f) = \sum_{i=1,..,n}(\partial f/\partial x_{i})({\bar x})s_{i}({\bar x}) + f^{\partial}({\bar x})$ and ${\bar s}$ is defined at $\alpha$,  we see that $\partial ' (f) \in {\mathcal O}(X)_{\alpha}$.  As $\alpha$ is a $\sharp$-point of $X(K)$, we see also that 
\newline
(*) $\partial ' (f)(\alpha) =  \partial (f(\alpha))$.  
\newline
Now suppose $(df_{1})_{\alpha} = (df_{2})_{\alpha}$.  As  $d(f-f({\alpha}))_{\alpha} = df_{\alpha}$ we may assume $f_{1}, f_{2}\in {\mathcal M}_{\alpha}$. 
By Fact 1.1, $f_{1} + {\mathcal M}_{\alpha}^{2} = f_{2} + {\mathcal M}_{\alpha}^{2}$.    
By (*), $\partial '$ preserves both ${\mathcal M}_{\alpha}$ and ${\mathcal M}_{\alpha}^{2}$. 
Hence $\partial '(f_{1} + {\mathcal M}_{\alpha}^{2}) = \partial '(f_{2} + {\mathcal M}_{\alpha}^{2}$. So $d(\partial ' (f_{1}))_{\alpha} = d(\partial '(f_{2}))_{\alpha}$. 
\newline
(ii) By what we have just seen, clearly $\partial '$  equips the $K$-vector space ${\mathcal M}_{\alpha}/{\mathcal M}_{\alpha}^{2}$ with a  $\partial$-module structure.   So we see that via the isomorphism from Fact 1.1 we obtain (ii).

\begin{Corollary} Let again $(X,\partial ') = (X,{\bar s})$ be a rational $D$-variety over the (algebraically closed) differential field $(K,\partial)$.  Then 
\newline
(i) For $f\in K(X)$, defining $D^{1}(df) = d(\partial ' (f))$ gives a well defined additive map from $\{df: f\in K(X)\}\subseteq {\Omega}(X)$ to $\Omega(X)$. 
\newline
(ii) the map in (i) extends uniquely to a map, also called $D^{1}:\Omega(X) \to \Omega(X)$, giving ${\Omega}(X)$ the structure of a $\partial '$-module over $K(X)$. 
\end{Corollary}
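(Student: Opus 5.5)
The plan is to deduce everything from Lemma 2.2 applied at $\sharp$-points, using Fact 1.7 to ensure there are Zariski-densely many of them. For (i), suppose $f,g\in K(X)$ with $df=dg$ in $\Omega(X)$. By the definition of equality of rational forms, there is a nonempty Zariski open $U\subseteq X(K)$ on which $f,g$ are defined and $df_{\alpha}=dg_{\alpha}$ for all $\alpha\in U$. We may also shrink $U$ so that ${\bar s}$, $\partial'(f)$ and $\partial'(g)$ are defined on it; this is possible since these are finitely many rational functions. We first reduce to the case $K$ differentially closed: embed $(K(X),\partial')$ over $(K,\partial)$ into a differentially closed field $\mathcal U$, replace $K$ by a differentially closed field containing $K$, and base change $X$. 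Equality of rational forms, and the formula $\partial'(f)=\sum_i(\partial f/\partial x_i)s_i+f^{\partial}$ of Fact 1.5, are preserved under this base change. I expect this to be routine, and one could instead work with $\sharp$-points in $\mathcal U$ directly. Fact 1.7 then gives a Zariski dense set of $\sharp$-points in $U$. At each such $\alpha$, Lemma 2.2(i) yields $d(\partial'f)_{\alpha}=d(\partial'g)_{\alpha}$, since this value depends only on $df_{\alpha}=dg_{\alpha}$. So $d(\partial' f)-d(\partial' g)$ is a rational form that vanishes on a Zariski dense set of points. Such a form must be zero: write it in a $K(X)$-basis $dx_{i_1},\dots,dx_{i_r}$ of $\Omega(X)$ (Fact 1.2), which is pointwise a basis of $T(X)^*_{\alpha}$ on an open set. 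The coefficient functions then vanish on a dense set, hence are zero. Additivity is immediate from $d(f+g)=df+dg$ and the additivity of $\partial'$.

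For (ii), note that $\Omega(X)$ is spanned over $K(X)$ by the $dg$, so any $\partial'$-module structure extending $D^1$ must satisfy
\[D^{1}\Big(\sum_i f_i\,dg_i\Big)=\sum_i\big(f_i\,d(\partial' g_i)+\partial'(f_i)\,dg_i\big).\]
This gives uniqueness. For existence, we take this formula as the definition, and the main task is well-definedness: if $\sum_i f_i\,dg_i=0$, we must show the right-hand side is $0$. The first approach is again pointwise at $\sharp$-points. Lemma 2.2(ii) says $D_V$ on $V=T(X)^*_{\alpha}$ is a $\partial$-module satisfying $D_V(dg_{\alpha})=d(\partial'g)_{\alpha}$. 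Hence
\[D_V\Big(\sum_i f_i(\alpha)\,(dg_i)_{\alpha}\Big)=\sum_i f_i(\alpha)\,d(\partial'g_i)_{\alpha}+\partial(f_i(\alpha))\,(dg_i)_{\alpha}.\]
By $(*)$ in the explanation of Lemma 2.2, $\partial(f_i(\alpha))=(\partial' f_i)(\alpha)$, so the right-hand side is exactly the value at $\alpha$ of the proposed form. If $\sum_i f_i\,dg_i=0$, its value at $\alpha$ is $0$ for $\alpha$ in an open set, so $D_V$ of it is $0$. Therefore the proposed form vanishes at a dense set of $\sharp$-points, and it is zero by the density argument from (i).

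With well-definedness established, additivity is clear. The Leibniz rule $D^1(h\omega)=hD^1(\omega)+\partial'(h)\omega$ follows from the formula together with $\partial'(hf_i)=h\partial'(f_i)+\partial'(h)f_i$. Finally, $D^1$ agrees with (i) on forms $dg$, by taking $f=1$. The main obstacle is the bookkeeping in the well-definedness step: choosing a single open set on which all the finitely many $f_i,g_i,\partial'f_i,\partial'g_i,{\bar s}$ are defined, and justifying that a rational form vanishing at a Zariski dense set of points is zero. The rest is formal.
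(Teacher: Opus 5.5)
Your proof is correct. Part (i) is the paper's argument: reduce to $(K,\partial)$ differentially closed, apply Lemma 2.2(i) at $\sharp$-points, and conclude by Fact 1.7. You also supply a step the paper glosses over. The paper's definition of equality of rational forms uses open sets, but the argument only yields vanishing on a Zariski-dense set; your $K(X)$-basis $dx_{i_1},\dots,dx_{i_r}$, pointwise a basis of the cotangent spaces on an open set, shows that this suffices.

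For (ii) you take a genuinely different route. The paper only obtains the formula $D^{1}(f\,dg)=\partial'(f)\,dg+f\,D^{1}(dg)$ by expanding $D^{1}(d(fg))$ in two ways, and then splits the sum ``by functoriality''. Read rigorously, that computation checks that the Leibniz-type extension to formal sums $\sum_i f_i\,dg_i$ kills the defining relations $d(fg)=f\,dg+g\,df$, $d(f+g)=df+dg$, $da=0$ ($a\in K$) of $\Omega(X)$ from Fact 1.2. That is a purely algebraic argument which needs no $\sharp$-points and would even give (i) for free. However, it relies on that presentation of $\Omega(X)$, and the paper leaves the well-definedness step implicit.

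You instead verify well-definedness against an arbitrary relation $\sum_i f_i\,dg_i=0$ pointwise. At each $\sharp$-point $\alpha$ you apply the $\partial$-module $D_{V}$ of Lemma 2.2(ii), use $(*)$ to identify $\partial(f_i(\alpha))$ with $(\partial' f_i)(\alpha)$, and then reuse the density argument. This gives an honest existence proof rather than just a derivation of the forced formula. It also keeps the whole construction inside the cotangent-space/$\sharp$-point framework the note is advertising. The cost is the reduction to a differentially closed base and the bookkeeping of a common open set of definition, both of which you handle adequately.
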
 
\begin{proof}  (i) We may assume that $(K,\partial)$ is differentially closed. 
Suppose  $f_{1}, f_{2}\in K(X)$ and $df_{1} = df_{2}$. 
We have to show that $d(\partial ' (f_{1})) = d(\partial ' (f_{2}))$, namely for a Zariski-dense set of $\alpha\in X(K)$, $d(\partial ' (f_{1}))_{\alpha} = d(\partial ' (f_{2}))_{\alpha}$.
But by Lemma 2.2, $d(\partial ' (f_{1}))_{\alpha} = d(\partial ' (f_{2}))_{\alpha}$ for all $\sharp$-points $\alpha$ of $X(K)$ (inside some Zariski open set).  So  we are finished by Fact 1.7. 
(Additivity is free as both $d$ and $\partial '$ are additive.)
\newline
(ii)  Actually the formula  $D^{1}(fdg) = \partial '(f)dg + f(D^{1}(g))$ is more or less forced on us by writing $D^{1}(d(fg))$ in two ways and equating them.  That is
$D^{1}(d(fg)) = d(\partial '(fg)) = d((\partial ' f)g) + f{\partial '}(g)) = g d(\partial '(f)) + {\partial '} (f) dg  + fd{\partial '}(g)) + {\partial '}(g)df$
\newline
$ = {\partial '}(g)df + g D^{1}(df)   + {\partial ' }(f) dg +  fD^{1}(dg)$.
Secondly
$D^{1}(d(fg)) = D^{1}(gdf + fdg) = D^{1}(gdf) + D^{1}(fdg)$, which by functoriality yields:
\newline
$D^{1}(gdf) =  {\partial '}(g)df + g D^{1}(df)$ and
\newline
$D^{1}(fdg) =  {\partial '} (f) dg +  fD^{1}(dg)$. 

\end{proof} 

\begin{Remark}  As discussed in the proof of Fact 1.7 we can  embed $(K(X),\partial')$ into $({\mathcal U},\partial)$ over $K$, so if ${\bar a}$ is the image of the coordinate functions ${\bar x}$ then ${\bar a}$ is both a generic point of $X$ over $K$ as well as a $\sharp$-point of $(X,{\bar s})$ in ${\mathcal U}$.  
In any case $\partial '$ on $K(X)$ now identifies with $\partial$ on $E = K({\bar a})$, ${\Omega}_{K}(X)$ identifies with the $E$-vector space $\Omega(E/K)$ of K\"ahler differentials on $E$ over $K$.   Under these identifications $D^{1}$ gives  $\Omega(E/K)$ the structure of a $\partial$-module over $E$.  This coincides with the construction Ax attributes to J. Johnson \cite{Johnson1}, \cite{Johnson2} in \cite{Ax}, and also what Kirby \cite{Kirby}, p. 468, calls the Lie derivative $L_{\partial}$  (except there $K$ is a field of constants of $\partial$). 
\end{Remark}

Finally this all extends to the proalgebraic setting  as discussed in sections 1.1 and 1.2.  So we now have an irreducible proalgebraic variety over $K$, $X_{\infty} = \varprojlim X_{n}$ with field of rational functions $K(X_{\infty}) 
=\cup_{n}K(X_{n})$, and  the $K(X_{\infty})$-vector space  $\Omega_{K}(X_{\infty})$ which is the union of the $K(X_{n})$-vector spaces $\Omega_{K}(X_{n})$. And we have the structure of a rational $D$-variety $(X_{\infty},\partial ')$ on 
$X_{\infty}$ given by an extension of $\partial$ to a derivation $\partial '$ on $K(X_{\infty})$.  Then Corollary 2.3  goes through to equip $\Omega(X_{\infty})$ with the structure $D^{1}$ of a $\partial '$-module over $K(X_{\infty})$.

In any case  we can again embed  $(K(X_{\infty}), \partial ')$ into $({\mathcal U}, \partial)$ over $K$, and as in Remark 2.4, obtain $E = K({\bar a}_{\infty})$, a differential subfield of ${\mathcal U},\partial)$, with a $\partial$-module structure $D^{1}$ on $\Omega(E/K)$ over $E$. 

\section{Ax's theorem} 
We just give a commentary on Ax's proof of the main theorem in \cite{Ax}, what is known as ``Ax-Schanuel for integer powers of the multiplicative group" or ``Ax-Schanuel for the exponential map". 
Anyway what I say here is relatively straightforward at the conceptual level with some simplifications due to Kirby \cite{Kirby}.  We also make use of the following basic duality: given an  irreducible variety $X$ over an algebraically closed field $C$, the $C(X)$-vector space $\Omega_{C}(X)$ of rational differential forms over $C$ on $X$, and the $C(X)$-vector space of rational vector fields on $X$.  Each is the dual space of the other. Algebraically taking $F = C(X)$ it is the duality between the $F$-spaces $\Omega(F/C)$ and $Der(F/C)$ (derivations $F\to F$ which are trivial on $C$).  For example given $D\in Der(F/C)$, $D$ acts $F$-linearly on $\Omega(F/C)$ via ${\hat D}(df) = D(f)$. Again this all extends to proalgebraic varieties, equivalently to the case of $F$ possibly infinitely generated over $C$.

Recall:
\begin{Proposition} (Ax)
Let $(F,\partial)$ be a differential field (characteristic $0$) with algebraically closed field $C$ of constants.
Let $a_{1},..,a_{n}, b_{1},..,b_{n} \in F$ such that $a_{1},..,a_{n}$ are $\Q$-linearly independent modulo $C$, the $b_{i}$ are nonzero, and 
$\partial a_{i} = \partial b_{i}/b_{i}$ for $i=1,..,n$. Then $tr.deg(C(a_{1},..,a_{n}, b_{1},..,b_{n}/C) \geq n+1$.
\end{Proposition}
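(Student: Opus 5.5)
Following Ax and Kirby, I will combine the Lie derivative of Section 2 with the duality between $\Omega(F/C)$ and $Der(F/C)$. First reduce to the case $F = C(a_{1},..,a_{n},b_{1},..,b_{n})^{alg}$, or more precisely to $E = C(\bar a,\bar b)$ closed under $\partial$, which is possibly infinitely generated. This is the proalgebraic setting at the end of Section 2, with ground field $C$, on which $\partial$ is trivial. Since transcendence degree is unchanged by passing to the relative algebraic closure, and $\Omega$ dimensions over $C$ equal transcendence degree, it suffices to work with $L = C(\bar a,\bar b)^{alg}$ and show $\dim_{L}\Omega(L/C)\ge n+1$. Derivations extend uniquely to algebraic extensions, so $\partial$ restricts to $L$. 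Then $\partial\in Der(L/C)$, and by Corollary 2.3 and Remark 2.4 we have the $\partial$-module $D^{1}$ on $\Omega(L/C)$, with $D^{1}(df) = d(\partial f)$.

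\textbf{Step 1.} Consider the forms $\omega_{i} = da_{i} - db_{i}/b_{i}\in\Omega(L/C)$. Then
\[
D^{1}(\omega_{i}) = d(\partial a_{i}) - D^{1}(b_{i}^{-1}db_{i}) = d(\partial b_{i}/b_{i}) - \partial(b_{i}^{-1})db_{i} - b_{i}^{-1}d(\partial b_{i}).
\]
Expanding $d(\partial b_{i}/b_{i}) = b_{i}^{-1}d(\partial b_{i}) - \partial b_{i}\, b_{i}^{-2}db_{i}$ and $\partial(b_{i}^{-1}) = -\partial b_{i}/b_{i}^{2}$, the terms cancel, so $D^{1}(\omega_{i})=0$. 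By Fact 2.1, the $\omega_{i}$ are $L$-linearly dependent iff they are linearly dependent over the constants of $L$, which are $C$ since $C$ is algebraically closed and $L$ is algebraic over $C(\bar a,\bar b)\subseteq F$.

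\textbf{Step 2} is the main obstacle. We must show that $C$-linear independence of the $\omega_{i}$ follows from the $\Q$-linear independence of the $a_{i}$ modulo $C$. Suppose $\sum c_{i}\omega_{i}=0$ with $c_{i}\in C$ not all $0$. Choose a $\Q$-basis $e_{1},..,e_{k}$ of $\Q$-span$(c_{i})$ and rewrite the relation as $\sum_{j} e_{j}\eta_{j}=0$, where $\eta_{j} = \sum_{i} q_{ij}\omega_{i}$ with $q_{ij}\in\Q$. The key claim, which is Ax's/Kirby's lemma, is that if $\sum e_{j}\eta_{j}=0$ with $e_{j}$ $\Q$-independent constants and each $\eta_{j}$ of the form $dx - dy/y$, then each $\eta_{j}=0$. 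I would try to prove this by specializing or by applying derivations $D\in Der(L/C)$ via the duality, or alternatively by Kirby's argument with $C$-linear maps $C\to\Q$. Granting the claim, some nontrivial $\Q$-combination (clearing denominators to $\Z$) gives $d(\sum m_{i}a_{i}) = d(\prod b_{i}^{m_{i}})/\prod b_{i}^{m_{i}}$. Writing $x=\sum m_{i}a_{i}$ and $y=\prod b_{i}^{m_{i}}$, the standard fact (a Rosenlicht-type lemma: $dx = dy/y$ in $\Omega(L/C)$ forces $x\in C$) then gives $\sum m_{i}a_{i}\in C$, a contradiction. I expect proving that $dx=dy/y$ forces $x$ constant, together with the $\Q$-reduction, to be the hardest part. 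For the former I would pass to a curve or valuation of $L/C$ and compare residues: $dy/y$ has only simple poles with integer residues, while $dx$ has zero residues.

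\textbf{Step 3.} Given Step 2, the $\omega_{1},..,\omega_{n}$ are $L$-linearly independent. Moreover $\partial$ is a nonzero element of $Der(L/C)$, nonzero because $a_{1}\notin C$. Under the pairing, $\hat\partial(\omega_{i}) = \partial a_{i} - \partial b_{i}/b_{i} = 0$, so the $\omega_{i}$ lie in the kernel of the nonzero functional $\hat\partial$ on $\Omega(L/C)$. That kernel is a proper subspace, so $\dim_{L}\Omega(L/C)\ge n+1$. Hence $tr.deg(C(\bar a,\bar b)/C)\ge n+1$.
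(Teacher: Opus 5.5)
Your architecture ($D^{1}\omega_i=0$, Fact 2.1 to pass from $L$-dependence to $C$-dependence, ruling out $C$-dependence, then a dimension count against the nonzero functional $\hat\partial$) is the paper's proof run contrapositively. However, your Step 2 is not carried out, and it corresponds to the paper's Claim 5, the real content of Ax's theorem. Worse, the ``key claim'' you reduce it to is false as stated. In $\Omega(C(t)/C)$ take $e_1=1$, $e_2=\sqrt{2}$, $\eta_1=d(\sqrt{2}\,t)-d(1)/1=\sqrt{2}\,dt$ and $\eta_2=d(-t)-d(1)/1=-dt$. Then $e_1\eta_1+e_2\eta_2=0$, yet neither $\eta_j$ is zero.

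What residues actually give is weaker. After rescaling the $e_j$, write $\eta_j=dx_j-dy_j/y_j$ with $x_j=\sum_i m_{ij}a_i$, $y_j=\prod_i b_i^{m_{ij}}$ and $m_{ij}\in\Z$. The relation then says that $\sum_j e_j\,dy_j/y_j=d(\sum_j e_jx_j)$ is exact. Taking $\mathrm{res}_p$ at each place gives $\sum_j e_j\,\mathrm{ord}_p(y_j)=0$. Residues convert the forms $dy_j/y_j$ into integers, and that is what lets the $\Q$-independence of the $e_j$ be used: it gives $\mathrm{ord}_p(y_j)=0$ for all $p$, hence $y_j\in C$. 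This holds first in transcendence degree $1$, and then in general by Ax's reduction through relatively algebraically closed $E_1\supseteq C$ with $\mathrm{tr.deg}(E/E_1)=1$.

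When $k=1$ this computation is exactly your Rosenlicht-type lemma, since there $y_1\in C$ forces $dx_1=0$. For $k\geq 2$, though, it only leaves $d(\sum_j e_jx_j)=0$, which does not give $dx_j=0$ for each $j$. The missing step is to use the differential equation by applying $\hat\partial$. From $\partial y_j=0$ you get $\sum_i m_{ij}\partial a_i=\sum_i m_{ij}\partial b_i/b_i=0$. So $\sum_i m_{ij}a_i\in C$, hence all $m_{ij}=0$ and all $c_i=0$. Equivalently, your claim becomes true once you add the hypothesis $\hat\partial\eta_j=0$, but its proof must use that hypothesis. None of the routes you list (specialization, derivations, $\Q$-linear maps $C\to\Q$) supplies this. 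The paper also sketches an alternative: the Kowalski--Kirby argument via algebraic subgroups of $\mathbb{G}_a^n\times\mathbb{G}_m^n$.

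There is also an error in your set-up: $\partial$ does not restrict to $L=C(\bar a,\bar b)^{alg}$. Unique extension of derivations to algebraic extensions only applies when you start from a differential subfield. In general $C(\bar a,\bar b)$ is not closed under $\partial$, because $\partial a_i=\partial b_i/b_i$ need not be algebraic over it. If it always were, $C((\bar a,\bar b)_\infty)$ would always be algebraic over $C(\bar a,\bar b)$. So $D^{1}$ is not defined on $\Omega(L/C)$, and Step 1 is not justified as written.

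The paper avoids this as follows. It puts $D^{1}$ on $\Omega(F/C)$ for the differential field $F=C((\bar a,\bar b)_\infty)$, and works in the $F$-subspace $V=\Omega(E/C)\otimes_E F$ with $E=C(\bar a,\bar b)$. In characteristic $0$ the map $\Omega(E/C)\otimes_E F\to\Omega(F/C)$ is injective, so linear (in)dependence of the $\omega_i$ transfers between $\Omega(E/C)$ and $\Omega(F/C)$. Fact 2.1 is then applied in $\Omega(F/C)$. Your Step 3 goes through with $\hat\partial$ taken as an $F$-linear functional on $V$, which is nonzero since $\hat\partial(da_1)=\partial a_1\neq 0$. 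This second issue is a routine repair; the gap in Step 2 is the substantive one.
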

\begin{proof}  Let $E = C({\bar a}, {\bar b})$ the function field of the irreducible variety $X$ over $C$ with generic (over $C$) point $({\bar a}, {\bar b})$ (where ${\bar a} = (a_{1},..,a_{n})$ and ${\bar b} = (b_{1},..,b_{n})$). 
Let $({\bar a}, {\bar b})_{\infty} = \nabla_{\infty}({\bar a}, {\bar b}) = (((\partial^{n}({\bar a}),\partial^{n}({\bar b})): n=0,1,....)$, 
a generic over $C$ point of an irreducible proalgebraic variety $X_{\infty}$ over $C$ projecting dominantly to $X$.  We may assume that $F = C(({\bar a},{\bar b})_{\infty})$
So we have a rational (proalgebraic) $D$-variety structure on $X_{\infty}$ given by $(F,\partial)$. 
From Section 3 we have a $\partial$-module over $F$, $\Omega_{C}(X_{\infty}) = \Omega(F/C)$ with the derivation $D^{1}$. 
$\Omega_{C}(X) = \Omega(E/C)$ is a $E$-vector subspace of $\Omega(F/C)$. 

Let us assume for a contradiction that $tr.deg(E/C) = dim(X)$ is $\leq n$.  By  Fact 1.2, $dim(\Omega(E/C))$ as an $E$-vector space is $\leq n$. Let $V$ be the $F$-vector subspace of $\Omega(F/C)$ obtained from $\Omega(E/C)$ by 
tensoring with $F$.  So the $F$-dimension of $V$ is $\leq n$. By the remarks at the beginning of the section the derivation $\partial$ on $F$ gives rise to a linear form $\hat\partial$ from $\Omega(F/C)$ to $F$ whose restriction $
\hat\partial$ to $V$ is also a linear form on $V$.
Let $\omega_{i} = da_{i} - db_{i}/b_{i} \in \Omega(E/C)$ for $i=1,..,n$.   The first claim is part of  Kirby's simplification of Ax's proof. 
\newline
{\bf Claim 1.}  (i) ${\hat\partial}|V$ is onto $F$.
\newline
(ii)  $Ker({\hat\partial}|V)$ has $F$-dimension $< n$. 
\newline
(iii) Each $\omega_{i}\in Ker({\hat\partial}|V)$
\newline
{\em Proof}  (i) By assumption $a_{1}\notin C$, whereby ${\hat\partial}(da_{1}) = \partial (a_{1}) \neq 0$, hence the image under $\hat\partial$ of the $F$-linear span of $da_{1}$ is all of $F$. 
\newline
(ii) follows from (i) as the $F$-dimension of $Ker({\hat\partial}|V)$ is $dim(V) - 1$.
\newline
(iii)  ${\hat\partial}(da_{i} - (1/b_{i})db_{i}) = {\hat\partial}(da_{i}) - (1/b_{i}){\hat\partial}(b_{i}) = \partial(a_{i} ) - (1/b_{i})\partial(b_{i}) = 0$ by the assumption of the proposition.  \qed

\vspace{5mm}
\noindent
It follows from (ii) and (iii) of Claim 1, that
\newline
{\bf Claim 2.}  $\omega_{1},...,\omega_{n}$ are $F$-linearly dependent.

\vspace{5mm}
\noindent
{\bf Claim 3.}  (see Lemma 3 of \cite{Ax}.) Each $\omega_{i}\in \Omega(F/C)$ is a solution of $D^{1} = 0$ (where recall $D^{1}$ gives the $\partial$-module structure on $\Omega(F/C)$ over $F$ induced by $\partial$ on $F$ as detailed in the previous section). 
\newline
{\em Proof.} Fix $i = 1,..,n$ and write $a$ for $a_{i}$ and $b$ for $b_{i}$. Then using Corollary 2.3 (with $X_{\infty}$ in place of $X$ and so $F = C(X_{\infty})$), we see that
$D^{1}(\omega_{i}) = D^{1}(da - (1/b)db) = D^{1}(da) -D^{1}((1/b)db) =  d(\partial (a)) - (1/b)(d(\partial (b)) +  (\partial (b)/b^2)db$.
\newline
But note that $d({\partial }b/b) = (1/b)d(\partial b) - (\partial b/b^{2})db$, so  substituting $d({\partial }b/b) + (\partial b/b^{2})db$ for $(1/b)d(\partial b)$, we see that 
\newline
$D^{1}(\omega_{i}) = d(\partial a)) - d(\partial b/b) - (\partial b/b^{2})db + (\partial (b)/b^2)db = d(\partial a) - d(\partial b/b) = d(\partial(a) - \partial(b)/b)) = 0$.  

\vspace{5mm}
\noindent
From Claims 2, 3 and Fact 2.1 we conclude:
\newline
{\bf Claim 4.} $\omega_{1}, ..., \omega_{n}$ are $C$-linearly dependent. 

\vspace{5mm}
\noindent
Claim 4 is more or less the main point, from which gets the contradiction in various ways. Ax's  arguments in \cite{Ax} are specific  to the case at hand (the logarithmic derivative on powers of the multiplicative group rather than other semiabelian varieties). 
For now we follow \cite{Ax}, after which we mention another account  which works in the more general environment. 

From Claim 4 we obtain $c_{i}\in C$ for $i=1,..,n$ not all $0$ such that  $\sum_{i}c_{i}\omega_{i} = 0$  (in $\Omega(E/C)$).  So $\sum_{i} c_{i}(db_{i}/b_{i}) = \sum_{i} c_{i}da_{i}$.
Then  
\newline
(*) $\sum_{i}c_{i}(db_{i}/b_{i}) = d(\nu)$
\newline
 where $\nu = \sum_{i}c_{i}a_{i}$. 

\vspace{5mm}
\noindent
{\bf Claim 5.}  The $db_{i}/b_{i}$ are ${\mathbb Z}$-linearly dependent. 

\vspace{2mm}
\noindent
Note that Claim 5 gives a contradiction to the assumptions, because applying $\hat\partial$ to a suitable ${\mathbb Z}$-linear combination of the $db_{i}/b_{i}$ will yield that some nontrivial $\mathbb Z$-linear combination of  the $\partial b_{i}/b_{i}$ is $0$ and so some nontrivial ${\mathbb Z}$-linear combination of the $\partial a_{i}$ is $0$, yielding that the $a_{i}$ are $\Q$-linearly dependent, modulo $C$, contradicting our assumptions. 
So it suffices to prove Claim 5.

\vspace{2mm}
\noindent
{\em Proof of Claim 5.}   Assume that the claim fails. Then by minimizing the number of $c_{i}$ needed we may assume that the $c_{i}$ are $\Q$-linearly independent. 
Ax first assumes $E$ to be an ``algebraic function field", namely finitely generated and of transcendence degree $1$ over $C$ and appeals to results on orders and residues at places $p$ of $F$ over $C$. 
For each such $p$ there is a valuation $ord_{p}: F \to \Z \cup\infty$ and $C$-linear map $res_{p}: \Omega(E/C) \to C$.  For any nonzero  $e\in E$,  $res_{p}(de/e) = ord_{p} (e)$ and for any $e\in E$, $res_{p}e = 0$.
So  comparing the residues at a place $p$ of the left and right hand side of (*), we see that $0 = res_{p}(\sum_{i}c_{i}(db_{i}/b_{i})) = \sum_{c_{i}}ord_{p}b_{i}$.  As the $c_{i}$ are assumed to be $\Q$-linearly independent, it follows that for 
each $p$, $v_{p}(b_{i}) = 0$ for all $i$.  This implies that each $b_{i}$ is a constant function and we get a contradiction (for each $i$, $\partial b_{i}/b_{i} = 0$ so $\partial a_{i} = 0$, so $a_{i}$ is a constant.)

Ax then reduces to this case, by observing  that if $E_{1}$ is a relatively algebraically closed subfield of $E$ which contains $C$ and with $tr.deg(E/E_{1}) = 1$ we can apply the canonical epimorphism from $\Omega(E/C)$ to $\Omega(E/E_{1})$ and the above argument to deduce that $b_{i}\in E_{1}$. But the intersection of all such $E_{1}$'s is $C$. Hence again $b_{i}\in C$ for all $i$, a contradiction. \qed

So this ends the proof of Proposition 3.1 following \cite{Ax}
 
\end{proof}

\vspace{5mm}
\noindent
We now discuss briefly  another approach to the conclusion of the proof of Proposition 3.1 which appears in \cite{Kirby}  the proof of which was suggested by Kowalski. This has the advantage of working in the more general context of a semiabelian variety in place of an algebraic torus. 
So we are in the same context as Proposition 3.1 but we assume we have got to Claim 4 that the $\omega_{i}$ are $C$-linearly dependent, say $\sum_{i=1,..,n} c_{i}\omega_{i} =0$ for $c_{i}\in C$, not all $0$. The argument now is purely algebro-geometric (as was the last part of the argument in the earlier proof).
Let $G$ be the algebraic group ${\mathbb G}_{a}^{n}\times {\mathbb G}_{m}^{n}$ which we will write as $H\times S$. 
Let now $K$ be the algebraic closure of the field $E$. 
The affine variety $X$ over $C$ of which ${(\bar a}, {\bar b})$ was a $C$-generic point is a subvariety of $G$.
We now consider the function field  $K(G)$ of $G$ over $K$.
We now consider the differential forms $\omega_{i}$ above as rational differential forms over $K$ on $G$, so elements of $\Omega_{K}(G)$ or equivalently of $\Omega(K(G)/K)$. So $\omega_{i} = dx_{i} - dy_{i}/y_{i}$.  These rational differential forms on $G$ are $K$-linearly independent and invariant under left multiplication by elements of $G(K)$. Hence $\omega = \sum_{i}c_{i}\omega_{i}$ is nonzero and $G(F)$-invariant.

Let $\alpha$ denote the $K$-point $({\bar a}, {\bar b})$ of $G$.  With the current notation, Claim 4 translates into  $\omega_{\alpha} = 0$.  By a result attributed by Kolchin \cite{Kolchin} to Rosenlicht \cite{Rosenlicht},
$\{\beta\in G(F): \omega_{\beta} = 0\}$ is a subgroup  of $G(F)$ which is moreover proper as $\omega\in \Omega_{K}(G)$ is nonzero. 

Now consider the irreducible subvariety $X$ of $G$. $X$ is defined over $C$ with $C$-generic point $\alpha$. Note that $\omega_{\gamma} = 0$ for all $\gamma\in G(C)$. So  translating $X$ by a suitable point of $G(C)$, we obtain an irreducible subvariety of $G$ defined over $C$ and containing the identity of $G$ and whose generic over $C$ point $\beta$ say also satisfies $\omega_{\beta} = 0$.  Let $H$ be the subgroup of $G$ generated by $Y$.  Then $H$ is connected, defined over $C$  and its generic over $C$ point $\nu$ is a sum of $C$-generic points of $Y$, hence (as $\omega$ is defined over $C$), $\omega_{\nu} = 0$. So $H$ is contained in $S$ hence is a proper algebraic subgroup of $G$.  Now $H$ is a product of algebraic subgroups (over $C$), $H_{1}$ of ${\mathbb G}_{a}^{n}$ and $H_{2}$ of ${\mathbb G}_{m}^{n}$.  Finally check that $H_{2}$ is a proper connected algebraic subgroup of ${\mathbb G}_{m}^{n}$,  so defined by a finite set of equations $z_{1}^{d_{1}}....z_{n}^{d_{n} } = 1$ with the $d_{i}\in \Z$,  not all $0$. It follows from the construction that the $a_{i}$ are $\Q$-linearly dependent modulo $\C$, a contradiction. 

\vspace{5mm}
\noindent
Another simple account of the last step  (Claim 5) appears  in \cite{Marker}  using instead ideas of Rosenlicht, as mentioned earlier.

\end{document}